\documentclass{article}
\usepackage{subfigure,amsthm}
\usepackage{tikz}
\usetikzlibrary{arrows,decorations.pathmorphing}
%


\author{
Masaki Watanabe \\
Graduate School of Mathematics, \\
the University of Tokyo \\
\texttt{mwata@ms.u-tokyo.ac.jp}
\footnote{This work was supported by Grant-in-Aid for JSPS Fellows No.~15J05373. }
}
\title{Kra\'skiewicz-Pragacz modules and Pieri and dual Pieri rules for Schubert polynomials}

\usepackage{amssymb,amsmath}
\newcommand{\NN}{\mathbb{N}}
\newcommand{\ZZ}{\mathbb{Z}}
\newcommand{\nonneg}{\ZZ_{\geq 0}}

\newtheorem{lem}{Lemma}[section]
\newtheorem{prop}[lem]{Proposition}
\newtheorem{thm}[lem]{Theorem}

\theoremstyle{definition}
\newtheorem{defn}[lem]{Definition}
\newtheorem{rmk}[lem]{Remark}
\newtheorem{eg}[lem]{Example}
\newcommand{\schub}{\mathfrak{S}}
\newcommand{\smod}{\mathcal{S}}

\newcommand{\borel}{\mathfrak{b}}
\newcommand{\mor}{\rightarrow}
\newcommand{\surj}{\twoheadrightarrow}

\newcommand{\lex}[1]{{ \,\underset{\mathrm{lex}}{#1}\, }}
\newcommand{\rlex}[1]{{ \,\underset{\mathrm{rlex}}{#1}\, }}
\newcommand{\lcov}{\lessdot}
\newcommand{\gcov}{\gtrdot}
\newcommand{\id}{\mathrm{id}}
\newcommand{\der}{\partial}
\newcommand{\lmb}{\lambda}
\newcommand{\Lmb}{\Lambda}
\newcommand{\catc}{\mathcal{C}}
\newcommand{\ch}{\mathrm{ch}}
\DeclareMathOperator{\sgn}{sgn}
\begin{document}
\maketitle
\paragraph{Abstract.}
In their 1987 paper Kra\'skiewicz and Pragacz defined certain modules, which we call KP modules, over the upper triangular Lie algebra whose characters are Schubert polynomials. 
In a previous work the author showed that the tensor product of KP modules always has a KP filtration, i.e. a filtration whose each successive quotients are isomorphic to KP modules. 
In this paper we explicitly construct such filtrations for certain special cases of these tensor product modules, namely $\smod_w \otimes S^d(K^i)$ and $\smod_w \otimes \bigwedge^d(K^i)$, corresponding to Pieri and dual Pieri rules for Schubert polynomials. 



\section{Introduction}
Schubert polynomials are one of the main subjects in algebraic combinatorics. 
One of the tools for studying Schubert polynomials is the modules introduced by Kra\'skiewicz and Pragacz. These modules, which here we call \emph{KP modules}, are modules over the upper triangular Lie algebra and have the property that their characters with respect to the diagonal matrices are Schubert polynomials. 

It is known that a product of Schubert polynomials is always a positive sum of Schubert polynomials. The previously known proof for this positivity property uses the geometry of the flag variety. In \cite{W} the author showed that the tensor product of two KP modules always has a filtration by KP modules and thus gave a representation-theoretic proof for this positivity. Although the proof there does not give explicit constructions for the KP filtrations, it may provide a new viewpoint for the notorious Schubert-LR problem asking for a combinatorial positive rule for the coefficient in the expansion of products of Schubert polynomials into a sum of Schubert polynomials. 

There are some cases where the expansions of products of Schubert polynomials are explicitly known. Examples of such cases include the Pieri and the dual Pieri rules for Schubert polynomials (\cite{BB}, \cite{Las}, \cite{Sot}, \cite{Win}). 
These are the cases where one of the Schubert polynomials is a complete symmetric function $h_d(x_1, \ldots, x_i)$ or an elementary symmetric function $e_d(x_1, \ldots, x_i)$. 
The purpose of this paper is to investigate the structure of tensor product modules corresponding to these products and to give explicit constructions of KP filtrations for these modules. 

The structure of this paper is as follows. In Section \ref{prelimi} we prepare some definitions and results on Schubert polynomials and KP modules. In Section \ref{sec:pieri-schub} we review the Pieri and the dual Pieri rules for Schubert polynomials. In Section \ref{sec:pieri-kp} we give explicit constructions for KP filrartions of the corresponding tensor product modules $\smod_w \otimes S^d(K^i)$ and $\smod_w \otimes \bigwedge^d(K^i)$. In Section \ref{sec:pieri-kp-prf} we give a proof of the main result.

\section{Preliminaries}
\label{prelimi}
Let $\NN$ be the set of all positive integers. 
By a \emph{permutation} we mean a bijection from $\NN$ to itself which fixes all but finitely many points. 
The \emph{graph} of a permutation $w$ is the set $\{(i, w(i)) : i \in \NN\} \subset \NN^2$. 
For $i < j$, let $t_{ij}$ denote the permutation which exchanges $i$ and $j$ and fixes all other points. Let $s_i = t_{i,i+1}$. 
For a permutation $w$, let $\ell(w)=\#\{i<j : w(i)>w(j)\}$. 
For a permutation $w$ and positive integers $p < q$, if $\ell(wt_{pq}) = \ell(w)+1$ we write $wt_{pq} \gcov w$. 
It is well known that this condition is equivalent to saying that $w(p)<w(q)$ and there exists no $p < r < q$ satisfying $w(p)<w(r)<w(q)$. 
For a permutation $w$ let $I(w) = \{(i,j) : i<j, w(i)>w(j)\}$. 

For a polynomial $f=f(x_1, x_2, \ldots)$ and $i \in \NN$ define $\der_if=\frac{f-s_if}{x_i-x_{i+1}}$. 
For a permutation $w$ we can assign its \emph{Schubert polynomial} $\schub_w \in \ZZ[x_1, x_2, \ldots]$ which is recursively defined by 
\begin{itemize}
\item $\schub_{w}=x_1^{n-1}x_2^{n-2} \cdots x_{n-1}$ if $w(1)=n, w(2)=n-1, \ldots, w(n)=1$ and $w(i)=i$ ($i > n$) for some $n$, and
\item $\schub_{ws_i}=\der_i\schub_w$ if $\ell(ws_i)<\ell(w)$. 
\end{itemize}

Hereafter let us fix a positive integer $n$. 
Let \[S^{(n)} = \{\text{$w$ : permutation, $w(n+1) < w(n+2) < \cdots$}\}.\] 
Note that if $w \in S^{(n)}$ then $I(w) \subset \{1, \ldots, n\} \times \NN$. 
Let $K$ be a field of characteristic zero. 
Let $\borel = \borel_n$ denote the Lie algebra of all $n \times n$ upper triangular matrices over $K$. 
For a $\borel$-module $M$ and $\lmb = (\lmb_1, \ldots, \lmb_n) \in \ZZ^n$, 
let $M_\lmb = \{m \in M : hm=\langle \lmb,h \rangle m \;\mbox{($\forall h = \mathrm{diag}(h_1, \ldots, h_n)$)}\}$ where $\langle \lmb,h \rangle = \sum_i \lmb_i h_i$. 
If $M$ is a direct sum of these $M_\lmb$ and these $M_\lmb$ are finite dimensional then we say that $M$ is a \emph{weight module}
and we define $\ch(M)=\sum_{\lmb} \dim M_\lmb x^\lmb$ where $x^\lmb=x_1^{\lmb_1}  \cdots x_n^{\lmb_n}$. 
For $1 \leq i \leq j \leq n$ let $e_{ij} \in \borel$ be the matrix with $1$ at the $(i,j)$-th position and all other coordinates $0$. 

Let $U$ be a vector space spanned by a basis $\{u_{ij} : 1 \leq i \leq n, j \in \NN \}$. Let $T = \bigoplus_{d=0}^\infty \bigwedge^d U$. 
The Lie algebra $\borel$ acts on $U$ by $e_{pq} u_{ij} = \delta_{iq}e_{pj}$ and thus on $T$. 
For $w \in S^{(n)}$ let $u_w = \bigwedge_{(i,j) \in I(w)} u_{ij} \in \bigwedge^{\ell(w)} U \subset T$. 
The \emph{Kra\'skiewicz-Pragacz module} $\smod_w$ (or the \emph{KP module} for short) associated to $w$ is the $\borel$-submodule of $\bigwedge^{\ell(w)} U \subset T$ generated by $u_w$. 
In \cite{KP} Kra\'skiewicz and Pragacz showed the following: 
\begin{thm}[{{\cite[Remark 1.6 and Theorem 4.1]{KP}}}]
$\smod_w$ is a weight module and $\ch(\smod_w)=\schub_w$. 
\end{thm}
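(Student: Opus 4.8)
The plan is to verify the two assertions separately, though they are intertwined: first that $\smod_w$ is a weight module, and then that its character equals $\schub_w$. For the weight-module claim, I would observe that $U$ itself is a weight module for the diagonal torus in $\borel$: the basis vector $u_{ij}$ is an eigenvector with weight $x_j$ (the diagonal matrix $\mathrm{diag}(h_1,\dots,h_n)$ acts on $u_{ij}$ through $e_{jj}u_{ij}=u_{ij}$ when $j\le n$, and trivially when $j>n$, so one must be slightly careful about the bookkeeping of which weights live in $\ZZ^n$; since $w\in S^{(n)}$ forces $I(w)\subset\{1,\dots,n\}\times\NN$ this is fine for the generator $u_w$, but the submodule it generates could a priori reach other $u_{ij}$). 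The key point is that $\borel$ acts by weight-homogeneous operators: $e_{pq}$ raises weight by $x_p-x_q$. Hence $\bigwedge^{\ell(w)}U$ is a weight module (each graded piece spanned by wedges of the $u_{ij}$), and any submodule, in particular $\smod_w$, is a sum of its weight spaces. Finite-dimensionality of the weight spaces of $\smod_w$ needs an argument: one shows that the weights appearing are bounded, e.g. because $\smod_w$ is generated in a single $\bigwedge$-degree $\ell(w)$ by a vector of a fixed weight and $\borel$ is finite-dimensional, so iterated application of the finitely many $e_{pq}$ with $p<q$ only produces weights in a finite downward cone.

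For the character identity, I would induct on $\ell(w)$ using the defining recursion for Schubert polynomials. The base case is the dominant permutation $w_0^{(n)}$ with $w(i)=n+1-i$: here $I(w)=\{(i,j):1\le i<j\le n\}$, $u_w=\bigwedge_{i<j}u_{ij}$, and one checks directly that $\smod_w$ is the one-dimensional span of $u_w$ (every $e_{pq}u_{ij}$ with $p<q$ either gives $0$ or a vector $u_{pj}$ already appearing in the wedge, hence the wedge is killed), whose weight is $\sum_{i<j}x_i = x_1^{n-1}x_2^{n-2}\cdots x_{n-1}$ as a monomial, matching $\schub_{w_0^{(n)}}$. For the inductive step, given $w$ with $\ell(ws_i)<\ell(w)$, I need $\ch(\smod_{ws_i})=\der_i\ch(\smod_w)$. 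The natural mechanism is an $\mathfrak{sl}_2$-type argument: the subalgebra generated by $e_{i,i+1}$ (together with the torus) acts on $\smod_w$, and the divided difference $\der_i$ at the level of characters corresponds to the operation of "completing to an $s_i$-invariant-like span" — concretely, $\der_i f = \sum_{k\ge 0}(\text{coefficient manipulation})$, which matches how a single raising operator $e_{i,i+1}$ redistributes weight between $x_i$ and $x_{i+1}$. One shows $\smod_{ws_i}$ is realized inside $\smod_w$ (or a related module) as the image under a suitable power of $e_{i,i+1}$, or identifies $\ch(\smod_w)$ as $\der_i$ applied to something; the precise statement is that $\smod_w$ restricted to this $\mathfrak{sl}_2$ decomposes so that taking the character and applying $\der_i$ yields exactly $\ch$ of the module generated by the appropriate lower element.

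I expect the main obstacle to be the inductive step: establishing the clean relation $\ch(\smod_{ws_i}) = \der_i\,\ch(\smod_w)$ requires understanding how the generator $u_w$ and the generator $u_{ws_i}$ are related under the $e_{i,i+1}$-action, and this depends on combinatorics of $I(w)$ versus $I(ws_i)$ — in particular on the characterization $ws_i\lessdot w$ meaning $w(i)>w(i+1)$ with the accompanying control of the inversion set. One must check that applying $e_{i,i+1}$ (and its iterates) to $u_w$ produces, up to scalar, vectors whose span has the right character, handling the signs coming from the wedge product carefully. Since the paper attributes this to Kra\'skiewicz–Pragacz, I would ultimately cite their Theorem 4.1 for the character statement and Remark 1.6 for the weight-module structure, but the self-contained route above (dominant base case plus $\mathfrak{sl}_2$/divided-difference induction) is the proof I would reconstruct.
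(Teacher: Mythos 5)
First, a remark on the comparison: the paper does not prove this statement at all --- it is imported verbatim from Kra\'skiewicz--Pragacz --- so your proposal is measured against a citation, and what matters is whether your reconstruction would stand on its own. It has two genuine problems. The first is the weight bookkeeping. From $e_{pq}u_{ij}=\delta_{iq}u_{pj}$ the Lie algebra moves the \emph{first} index of $u_{ij}$ and leaves the second inert; in particular $e_{kk}u_{ij}=\delta_{ik}u_{ij}$, so $u_{ij}$ has weight $x_i$, not $x_j$. Your later assertions (that $e_{pq}$ shifts weight by $x_p-x_q$, and that the base-case weight is $\sum_{i<j}x_i$) tacitly use the correct convention, so the write-up is internally inconsistent. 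With the correct convention your worry about weights leaving $\ZZ^n$ disappears (first indices always lie in $\{1,\dots,n\}$ because $e_{pq}\in\borel_n$), but finite-dimensionality of the weight spaces of $\smod_w$ is \emph{not} a consequence of ``finitely many weights in a cone'': the weight spaces of the ambient $\bigwedge^{\ell(w)}U$ are infinite-dimensional, since infinitely many second indices $j$ give the same weight. What saves you is that the action preserves the multiset of second indices, so $\smod_w$ lies in the finite-dimensional subspace spanned by wedges of $u_{ab}$ with $b$ among the second coordinates of $I(w)$. This observation is missing.

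The second and more serious gap is the inductive step, which is the entire content of the theorem and is only gestured at. The specific mechanism you propose --- realizing $\smod_{ws_i}$ inside $\smod_w$ as the image of a power of $e_{i,i+1}$ --- cannot work in general: $\dim\smod_{ws_i}=\schub_{ws_i}(1,\dots,1)$ may well exceed $\dim\smod_w=\schub_w(1,\dots,1)$, since $\der_i$ lowers degree but can increase the number of monomials, so $\smod_{ws_i}$ need not be a subquotient of $\smod_w$. Relatedly, the operator with a clean module-theoretic interpretation for $\borel$-modules is the Demazure operator $f\mapsto\der_i(x_if)$ (character of induction through the $i$-th minimal parabolic), not the bare $\der_i$ appearing in the Schubert recursion; bridging that discrepancy is exactly where the work lies, and Kra\'skiewicz--Pragacz's actual argument takes a different route (a direct analysis of the Schubert functor via explicit filtrations reducing to modules with known characters). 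Your base case is fine, and your ultimate fallback of citing \cite{KP} is of course what the paper itself does; but as a self-contained proof the proposal does not close.
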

\begin{eg}
If $w = s_i$, then $u_w = u_{i,i+1} \in U$ and thus $\smod_w = \bigoplus_{1 \leq j \leq i} Ku_{j,i+1} \cong \bigoplus_{1 \leq j \leq i}Ku_j =: K^i$ on which $\borel$ acts by $e_{pq} u_j = \delta_{qj}u_p$. 
\end{eg}

A \emph{KP filtration} of a $\borel$-module $M$ is a filtration $0 = M_0 \subset \cdots \subset M_r = M$ such that each $M_i / M_{i-1}$ is isomorphic to some KP module. 

\section{Pieri and dual Pieri rules for Schubert polynomials}
\label{sec:pieri-schub}
\begin{defn}
For $w \in S_\infty$, $i \geq 1$ and $d \geq 0$, let 
\[
X_{i,d}(w) = \{t_{p_1q_1} t_{p_2q_2} \cdots t_{p_dq_d} : p_j \leq i, q_j > i, w_1 \lcov w_2 \lcov \cdots, w_1(p_1) < w_2(p_2) < \cdots \}
\]
and 
\[
Y_{i,d}(w) = \{t_{p_1q_1} t_{p_2q_2} \cdots t_{p_dq_d} : p_j \leq i, q_j > i, w_1 \lcov w_2 \lcov \cdots, w_1(q_1) > w_2(q_2) > \cdots \}
\]
where $w_1=w, w_2=wt_{p_1q_1}, w_3=wt_{p_1q_1}t_{p_2q_2}, \cdots$.  
\label{defn-pieri}
\end{defn}

Note that the condition for $X_{i,d}(w)$ (resp.\ $Y_{i,d}(w)$) implies that $q_1, \ldots, q_d$ (resp.\ $p_1, \ldots, p_d$) are all different. 

\begin{thm}[Conjectured in \cite{BB} and proved in \cite{Win}, also appears with different formulations in \cite{Las} and \cite{Sot}]
We have
\[
\schub_w \cdot h_d(x_1, \ldots, x_i) = \sum_{\zeta \in X_{i,d}(w)} \schub_{w\zeta}
\]
and
\[
\schub_w \cdot e_d(x_1, \ldots, x_i) = \sum_{\zeta \in Y_{i,d}(w)} \schub_{w\zeta}
\]
where $h_d$ and $e_d$ denote the complete and elementary symmetric functions respectively. 
\hfill $\Box$
\label{schub-pieri}
\footnote{
The formulation of dual Pieri rule here is slightly different from the one in \cite{BB} but they are easily shown to be equivalent. 
}
\end{thm}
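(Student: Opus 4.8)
The plan is to establish \emph{Monk's rule} (the case $d=1$) directly, and then to bootstrap to all $d$ by an induction on the number of variables $i$, using Monk's rule for the single variable $x_i$ as the engine. First, since $h_1(x_1,\dots,x_i)=e_1(x_1,\dots,x_i)=x_1+\cdots+x_i$ and the monotonicity conditions in Definition~\ref{defn-pieri} are vacuous for $d=1$, we have $X_{i,1}(w)=Y_{i,1}(w)=\{t_{pq}:p\le i<q,\ wt_{pq}\gcov w\}$, so the $d=1$ case of both identities is exactly
\[
\schub_w\cdot(x_1+\cdots+x_i)=\sum_{p\le i<q,\ wt_{pq}\gcov w}\schub_{wt_{pq}}.
\]
This is classical; a self-contained proof proceeds by downward induction on $\ell(w)$, using that for $j<i$ the polynomial $x_1+\cdots+x_i$ is symmetric in $x_j,x_{j+1}$, so that $\der_j\bigl(\schub_w\cdot(x_1+\cdots+x_i)\bigr)=\schub_{ws_j}\cdot(x_1+\cdots+x_i)$ whenever $ws_j\lcov w$, and checking that $\der_j$ carries the right-hand side for $w$ to that for $ws_j$; the contribution of the last variable is pinned down separately. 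Subtracting the rule for $i-1$ from the rule for $i$ gives the single-variable form
\[
\schub_w\cdot x_i=\sum_{q>i,\ wt_{iq}\gcov w}\schub_{wt_{iq}}-\sum_{p<i,\ wt_{pi}\gcov w}\schub_{wt_{pi}}.
\]

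For general $d$, I would \emph{not} induct on $d$: the identity $h_1\cdot h_{d-1}=h_d+s_{(d-1,1)}$ brings in a two-row Schur polynomial $s_{(d-1,1)}(x_1,\dots,x_i)$ that is not of the form $h_m(x_1,\dots,x_i)$, so the recursion does not close. Instead I would induct on $i$, starting from $i=1$, where both statements reduce to iterated single-index Monk (matching $h_d(x_1)=x_1^d$ and $e_d(x_1)$, which vanishes for $d\ge 2$). For the step $i-1\to i$, apply the known recursions $e_d(x_1,\dots,x_i)=e_d(x_1,\dots,x_{i-1})+x_i\,e_{d-1}(x_1,\dots,x_{i-1})$ and $h_d(x_1,\dots,x_i)=\sum_{k=0}^{d}x_i^{\,k}\,h_{d-k}(x_1,\dots,x_{i-1})$, feed in the inductive hypothesis for $i-1$, and expand each occurrence of $x_i$ by the single-variable Monk rule. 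After these substitutions, both sides of, say, the $e$-identity become sums of Schubert polynomials indexed by (signed) chains, and what remains is the purely combinatorial statement
\[
\sum_{\zeta\in Y_{i,d}(w)}\schub_{w\zeta}=\sum_{\zeta\in Y_{i-1,d}(w)}\schub_{w\zeta}+\sum_{\zeta\in Y_{i-1,d-1}(w)}\schub_{w\zeta}\cdot x_i,
\]
to be verified by matching chains on the two sides, with the negative terms from the single-variable rule cancelling appropriate boundary contributions; the $h$-identity is handled in parallel.

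I expect this last step --- the bijection-with-cancellation among saturated Bruhat chains --- to be the main obstacle: one must control precisely how inserting or deleting a crossing at position $i$ interacts with the covering conditions $w_j\lcov w_{j+1}$ and with the monotonicity of the values $w_j(p_j)$ (resp.\ $w_j(q_j)$) along a chain. Organizing the chains via the $i$-Bruhat order in the sense of Bergeron--Sottile, or via Kohnert-type moves on diagrams as in \cite{Win}, should make this bookkeeping manageable. A conceptually different route, which I would also mention, is the geometric one of \cite{Sot}: $\schub_w$ represents a Schubert variety in a flag manifold, while $h_d(x_1,\dots,x_i)$ and $e_d(x_1,\dots,x_i)$ are pulled back from special Schubert classes on the Grassmannian of $i$-planes, and the product is computed by an explicit one-parameter degeneration of the corresponding special Schubert variety whose limiting components are indexed precisely by $X_{i,d}(w)$ (resp.\ $Y_{i,d}(w)$). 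We remark finally that the main result of Section~\ref{sec:pieri-kp} will furnish yet another proof, realizing these identities at the level of KP modules, although that argument is logically downstream of the present section.
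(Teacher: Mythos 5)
First, a point of reference: the paper does not prove Theorem \ref{schub-pieri} at all. It is quoted from the literature (conjectured in \cite{BB}, proved in \cite{Win}, with variants in \cite{Las} and \cite{Sot}) and is in fact an \emph{input} to Section \ref{sec:pieri-kp}, where the dimension count establishing Theorem \ref{filtr} invokes it. So there is no in-paper argument to compare yours against; the question is only whether your proposal stands on its own.

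It does not yet, and the gap sits exactly where the theorem is hard. The ingredients you actually supply --- Monk's rule for $h_1=e_1$ via divided differences, its signed single-variable form $\schub_w\cdot x_i=\sum_{q>i}\schub_{wt_{iq}}-\sum_{p<i}\schub_{wt_{pi}}$, the recursions $e_d(x_1,\dots,x_i)=e_d(x_1,\dots,x_{i-1})+x_i\,e_{d-1}(x_1,\dots,x_{i-1})$ and $h_d(x_1,\dots,x_i)=\sum_k x_i^k h_{d-k}(x_1,\dots,x_{i-1})$, and the base case $i=1$ --- are all correct but routine. The entire content of the Pieri rule is concentrated in the step you defer: after expanding each $\schub_{w\zeta}\cdot x_i$ by the signed Monk rule, one must produce a sign-reversing involution on the resulting signed chains whose surviving terms are exactly those indexed by $Y_{i,d}(w)$ (resp.\ $X_{i,d}(w)$), and this involution has to respect simultaneously the covering conditions $w_j\lcov w_{j+1}$ and the monotonicity of the values $w_j(q_j)$ (resp.\ $w_j(p_j)$) along the chain. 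You explicitly label this ``the main obstacle'' and offer only the hope that the $i$-Bruhat order or Kohnert moves ``should make this bookkeeping manageable''; that bookkeeping is precisely what remained open between \cite{BB} and the proofs in \cite{Sot} and \cite{Win}, so stating the intermediate identity to be verified is not a proof of it. Either carry out the cancellation argument in full or reduce cleanly to a reference that does. Your closing remarks are fine: the geometric route of \cite{Sot} is a legitimate alternative, and you are right that Theorem \ref{filtr} cannot serve as an independent proof here, since its derivation uses Theorem \ref{schub-pieri}.
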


Note here that the permutation $\zeta \in X_{i,d}(w)$ (or $Y_{i,d}(w)$) in fact uniquely determines its decomposition into transpositions satisfying the conditions in Definition \ref{defn-pieri}. 
So we can write, without ambiguity, for example ``for $\zeta = t_{p_1q_1} \cdots t_{p_dq_d} \in X_{i,d}(w)$ define (something) as (some formula involving $p_j$ and $q_j$)''. 
Hereafter if we write such we will always assume the conditions in Definition \ref{defn-pieri}. 


\section{Explicit Pieri and dual Pieri rules for KP modules}
\label{sec:pieri-kp}
The author showed in \cite{W} that the tensor product of KP modules always has a KP filtration. Since $S^d(K^i)$ and $\bigwedge^d(K^i)$ ($1 \leq i \leq n$, $d \geq 1$) are special cases of KP modules, $\smod_w \otimes S^d(K^i)$ and $\smod_w \otimes \bigwedge^d(K^i)$ ($w \in S^{(n)}$) have KP filtrations. In this section we give explicit constructions for these filtrations. 

For positive integers $p \leq q$ we define an operator $e'_{qp}$ acting on $T$ as $e'_{qp} (u_{a_1b_1} \wedge u_{a_2b_2} \wedge \cdots) = \sum_{k}(\cdots \wedge \delta_{p b_k}u_{a_kq} \wedge \cdots)$. 
Let these operators act on $T \otimes S^d(K^i)$ and $T \otimes \bigwedge^d(K^i)$ by applying them on the left-hand side tensor component. 
Also for $j \geq 1$ define an operator $\mu_j : T \otimes \bigotimes^a(K^i) \mor T \otimes \bigotimes^{a-1}(K^i)$ ($a \geq 1$) by $u \otimes (v_1 \otimes v_2 \otimes \cdots) \mapsto (\iota_j(v_1) \wedge u) \otimes (v_2 \otimes v_3 \otimes \cdots)$ where $\iota_j(u_p) = u_{pj}$ ($1 \leq p \leq i$). We denote the restrictions of $\mu_j$ to $T \otimes S^a(K^i)$ and $T \otimes \bigwedge^a(K^i)$ (seen as submodules of $T \otimes \bigotimes^a(K^i)$) by the same symbol. 
Note that $e'_{rs}$ and $\mu_j$ give $\borel$-endomorphisms on $T \otimes S^\bullet(K^i)$ and $T \otimes \bigwedge^\bullet(K^i)$. 

For a permutation $z$ and positive integers $p<q$ let $m_{pq}(z) = \#\{r>q : z(p)<z(r)<z(q)\}$ and $m'_{qp}(z) = \#\{r<p : z(p)<z(r)<z(q)\}$. 
For $\zeta = t_{p_1q_1} \cdots t_{p_dq_d} \in X_{i,d}(w)$ (resp.\ $Y_{i,d}(w)$) define
\begin{align*}
v_{\zeta} &= (\prod_j e_{p_jq_j}^{m_{p_jq_j}(w_j)} u_w) \otimes \prod_j u_{p_j} \\
&= (\prod_j e_{p_jq_j}^{m_{p_jq_j}(w_j)} u_w) \otimes \left( \sum_{\sigma \in S_d} u_{p_{\sigma(1)}} \otimes \cdots \otimes u_{p_{\sigma(d)}}\right) \in \smod_w \otimes S^d(K^i)
\end{align*}
(resp.\
\begin{align*}
v_\zeta &= (\prod_j e_{p_jq_j}^{m_{p_jq_j}(w_j)} u_w) \otimes \bigwedge_j u_{p_j} \\
&= (\prod_j e_{p_jq_j}^{m_{p_jq_j}(w_j)} u_w) \otimes \left( \sum_{\sigma \in S_d} \sgn \sigma \cdot u_{p_{\sigma(1)}} \otimes \cdots \otimes u_{p_{\sigma(d)}}\right) \in \smod_w \otimes \bigwedge^d(K^i)
\end{align*}
) \\
where $w_j = wt_{p_1q_1}\cdots t_{p_{j-1}q_{j-1}}$ as in Definition \ref{defn-pieri}. Note that these are also well-defined even if some $q_j$ are greater than $n$, since in such a case $m_{p_jq_j}(w_j) = 0$. Note also that the products of the operators $e_{p_jq_j}$ above are well-defined since the operators $e_{p_jq_j}$ ($p_j \leq i$, $q_j > i$) commute with each others. 
Also, for such $\zeta$, define a $\borel$-homomorphism $\phi_\zeta : T \otimes \bigotimes^d(K^i) \mor T$ 
by
\[
\phi_{\zeta} = \mu_{q_d}\cdots\mu_{q_1} \cdot \prod_j (e'_{q_jp_j})^{m'_{q_jp_j}(w_j)}. \] Note that the order in the product symbol does not matter since the operators $e'_{q_jp_j}$ commute.

Let $\lex<$ and $\rlex<$ denote the lexicographic and reverse lexicographic orderings on permutations respectively, i.e.\ for permutations $u$ and $v$, $u \lex< v$ (resp.\ $u \rlex< v$)
if there exists a $k$ such that $u(j)=v(j)$ for all $j < k$ (resp.\ $j > k$) and $u(k) < v(k)$. 

\begin{prop}
For $\zeta, \zeta' \in X_{i,d}(w)$ (resp.\ $Y_{i,d}(w)$), 
\begin{itemize}
\item $\phi_{\zeta}(v_\zeta)$ is a nonzero multiple of $u_{w\zeta} \in T$, and
\item $\phi_{\zeta'}(v_\zeta) = 0$ if $(w\zeta)^{-1} \lex< (w\zeta')^{-1}$ (resp.\ $(w\zeta)^{-1} \rlex< (w\zeta')^{-1}$). 
\end{itemize}
\label{mainprop}
\end{prop}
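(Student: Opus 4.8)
The plan is to compute $\phi_{\zeta'}(v_\zeta)$ directly by unwinding the definitions of the operators involved. Recall that $v_\zeta$ is, up to the operators $e_{p_jq_j}$, the element $u_w \otimes \prod_j u_{p_j}$ (symmetrized or antisymmetrized), and that $\phi_{\zeta'} = \mu_{q'_d}\cdots\mu_{q'_1}\cdot\prod_j(e'_{q'_jp'_j})^{m'_{q'_jp'_j}(w'_j)}$. Since all the $e$'s and $e'$'s are $\borel$-endomorphisms and the $\mu_j$ are as well, the main point is to understand, combinatorially, what happens to a wedge $u_w = \bigwedge_{(a,b)\in I(w)}u_{ab}$ when one applies a string of raising operators $e_{pq}$ (which replaces a factor $u_{pb}$ by $u_{qb}$ up to sign, summed over choices) followed by the lowering-type operators $e'_{qp}$ (which replaces $u_{aq}$ by $u_{ap}$) and finally the $\mu_{q}$'s (which absorb the $K^i$-tensor factors $u_p$ into the wedge as $u_{pq}$). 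The key structural fact to establish first is the exponent bookkeeping: the number $m_{pq}(z)$ counts exactly the factors $u_{ab}$ of $u_z$ with $b>q$ lying "between" the rows $p$ and $q$, so that $e_{pq}^{m_{pq}(z)}$ applied to $u_z$ produces (a nonzero multiple of) the wedge for $z$ with those entries moved up to row $q$; dually $m'_{qp}(z)$ governs how $e'_{qp}$ undoes this. I would first record this as a lemma about a single transposition $t_{pq}$ with $wt_{pq}\gcov w$, identifying $e_{pq}^{m_{pq}(w)}u_w$ and then $\mu_q\,(e'_{qp})^{m'_{qp}(w)}$ of it with $u_{wt_{pq}}$ up to a nonzero scalar.

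From there, the first bullet follows by induction on $d$, peeling off the last transposition $t_{p_dq_d}$: the chain conditions $w_1\lcov\cdots$ together with $w_j(p_j)<w_{j+1}(p_{j+1})<\cdots$ (for $X$) or $w_j(q_j)>\cdots$ (for $Y$) are precisely what guarantee that the single-step computation can be iterated without the successive moves interfering — the distinctness of the $q_j$ (resp.\ $p_j$), noted right after Definition~\ref{defn-pieri}, ensures the operators act on disjoint "slots" of the wedge in the relevant order, and the $\mu_{q_j}$'s are applied in an order compatible with the chain. So $\phi_\zeta(v_\zeta)$ telescopes down to a nonzero multiple of $u_{w\zeta}$.

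For the second bullet, the vanishing statement, I would argue by contradiction: suppose $\phi_{\zeta'}(v_\zeta)\neq 0$. Expanding everything into wedges of basis vectors $u_{ab}$, a nonzero contribution means there is a way to match the "create" operations coming from $\zeta$ (the $e_{p_jq_j}$'s and the tensor factors $u_{p_j}$) against the "annihilate/absorb" operations coming from $\zeta'$ (the $e'_{q'_jp'_j}$'s and the $\mu_{q'_j}$'s) so that the resulting wedge is the standard wedge of some permutation and is nonzero. I would extract from such a matching enough combinatorial data — essentially, which columns $q'_j$ receive which rows, read off from the $\mu$'s — to reconstruct that $(w\zeta)^{-1}(q'_j) \geq (w\zeta')^{-1}(q'_j)$ coordinatewise in the appropriate range, with the lex (resp.\ reverse lex) comparison on $(w\zeta)^{-1}$ versus $(w\zeta')^{-1}$ then forcing equality of the two, contradicting $(w\zeta)^{-1}\lex< (w\zeta')^{-1}$ (resp.\ $\rlex<$). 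Concretely, the columns $>i$ are where the action of $\zeta,\zeta'$ lives, and $(w\zeta)^{-1}$ restricted to those columns records where the moved entries landed; the operator $\phi_{\zeta'}$ can only succeed on $v_\zeta$ if $\zeta$ put entries at least as far "up" (in the lex sense on the inverse) as $\zeta'$ would, which is the content of the hypothesis being violated.

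The main obstacle I anticipate is the bookkeeping in the second bullet: making precise the statement "a nonzero term in the expansion forces such-and-such inequality on $(w\zeta)^{-1}$". The operators $e_{pq}$ and $e'_{qp}$ are sums over which wedge factor they hit, and $\mu_{q}$ inserts into a fixed position, so tracking signs and, more importantly, tracking which basis vectors survive the wedge (no repeated $u_{ab}$) requires care. I expect the cleanest route is to choose, for each basis vector $u_{ab}$ appearing, to follow its "column history" $b$ through the composition and observe that $\phi_{\zeta'}$ only moves columns among $\{1,\dots,i\}\cup\{q'_1,\dots,q'_d\}$ in a prescribed way, so the supports must match up exactly; the lex/reverse-lex hypothesis is then exactly the obstruction to that matching unless the two permutations coincide. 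All sign computations and the precise nonzero scalars can be deferred, since only nonvanishing (first bullet) and vanishing (second bullet) are asserted.
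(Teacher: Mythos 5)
Your plan for the first bullet is essentially the paper's: reduce to a single-transposition computation showing that $\mu_{q}(e'_{qp})^{m'_{qp}(w)}e_{pq}^{m_{pq}(w)}u_w$ is a nonzero multiple of $u_{wt_{pq}}$, then iterate using the non-interference of the operators for distinct $q_j$ (resp.\ $p_j$). That part is sound, modulo one detail: since $\phi_\zeta$ applies $\mu_{q_1}$ innermost, the induction must peel off the \emph{first} transposition, not the last as you suggest, and the "non-interference" claim is itself a nontrivial lemma (that $m_{p_aq_a}(w_a)=m_{p_aq_a}(w)$ and $m'_{q_ap_a}(w_a)=m'_{q_ap_a}(w)$ when no earlier $p_b$ equals $p_a$), which the paper proves by a rectangle-counting argument on permutation graphs.

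The genuine gap is in the second bullet. Your proposed mechanism — extract from a nonzero term a "matching" and deduce a coordinatewise inequality $(w\zeta)^{-1}(q'_j)\geq(w\zeta')^{-1}(q'_j)$ contradicting the lex hypothesis — is not established and is not the shape of statement that the computation actually yields. What one can actually extract from nonvanishing is a \emph{two-transposition} criterion: for $t_{pq},t_{p'q'}\in X_{i,1}(w)$, if $u_{pq'}\wedge e_{pq}^{m_{pq}(w)}(e'_{q'p'})^{m'_{q'p'}(w)}u_w\neq 0$ then $w(p')\geq w(p)$ and $w(q')\geq w(q)$. The argument then runs in the \emph{opposite} logical direction from yours: the lex hypothesis $(w\zeta)^{-1}\lex\leq(w\zeta')^{-1}$ is used first, to force $w(p_1)\geq w(p'_1)$ (and $q_1\leq q'_1$ when $p_1=p'_1$), and the criterion above then kills every summand unless $(p_1,q_1)=(p'_1,q'_1)$, after which one inducts on $d$. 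Moreover, your "column history" heuristic does not engage with the symmetrization over $S_d$ in $v_\zeta$: for each $\sigma$ one must locate the minimal $a$ with $p_a=p_{\sigma(1)}$ and invoke the chain condition $w_1(p_1)<w_2(p_2)<\cdots$ to get $w(p_a)=w_a(p_a)\geq w(p_1)$ before the criterion applies. Without the two-transposition vanishing lemma, the exponent-invariance lemma, and this handling of the $S_d$-sum, the vanishing claim remains unproved.
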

The proof for this proposition is given in the next section. 
Here we first see that Proposition \ref{mainprop} gives desired filtrations. 

For a $\borel$-module $M$ and elements $x, y, \ldots, z \in M$ let $\langle x, y, \ldots, z \rangle$ denote the submodule generated by these elements. Consider the sequence of submodules 
\[
0 \subset \langle v_{\zeta_1} \rangle \subset \langle v_{\zeta_1}, v_{\zeta_2} \rangle 
\subset \cdots \subset
\langle v_\zeta : \text{$\zeta \in X_{i,d}(w)$ (resp.\ $Y_{i,d}(w)$)} \rangle
\]
inside $\smod_w \otimes S^d(K^i)$ (resp.\ $\smod_w \otimes \bigwedge^d(K^i)$), where $\zeta_1, \zeta_2, \ldots \in X_{i,d}(w)$ (resp.\ $Y_{i,d}(w)$) are all the elements ordered increasingly by the lexicographic (resp.\ reverse lexicographic) ordering of $(w\zeta)^{-1}$. 
From the proposition we see that there are surjections
$
\langle v_{\zeta_1}, \cdots, v_{\zeta_j} \rangle
/
\langle v_{\zeta_1}, \cdots, v_{\zeta_{j-1}} \rangle
\surj 
\smod_{w\zeta_j}
$
induced from $\phi_{\zeta_j}$. 
Thus we have
\[
\dim(\smod_w \otimes S^d(K^i))
\geq
\dim \langle v_\zeta : \text{$\zeta \in X_{i,d}(w)$} \rangle
\geq 
\sum_{\zeta \in X_{i,d}(w)} \dim \smod_{w\zeta}
=
\dim(\smod_w \otimes S^d(K^i))
\]
and
\[
\dim(\smod_w \otimes \bigwedge^d(K^i))
\geq
\dim \langle v_\zeta : \text{$\zeta \in Y_{i,d}(w)$} \rangle
\geq 
\sum_{\zeta \in Y_{i,d}(w)} \dim \smod_{w\zeta}
=
\dim(\smod_w \otimes \bigwedge^d(K^i))
\]
respectively, where the last equalities are by Proposition \ref{schub-pieri}. 
So the equalities must hold everywhere. Thus $\langle v_\zeta : \text{$\zeta \in X_{i,d}(w)$ (resp.\ $Y_{i,d}(w)$)} \rangle = \smod_w \otimes S^d(K^i)$ (resp.\ $\smod_w \otimes \bigwedge^d(K^i)$) and the surjections above are in fact isomorphisms. 
So, in conclusion, we get from Proposition \ref{mainprop} the following:
\begin{thm}
Let $M = \smod_w \otimes S^d(K^i)$ (resp.\ $\smod_w \otimes \bigwedge^d(K^i)$). Define $v_\zeta$ and $\phi_\zeta$ as above.
Then $M$ is generated by $\{ v_\zeta : \text{$\zeta \in X_{i,d}(w)$ (resp.\ $Y_{i,d}(w)$)} \}$ as a $\borel$-module and 
\[
0 \subset \langle v_{\zeta_1} \rangle \subset \langle v_{\zeta_1}, v_{\zeta_2} \rangle 
\subset \cdots \subset
\langle v_\zeta : \text{$\zeta \in X_{i,d}(w)$ (resp.\ $Y_{i,d}(w)$)} \rangle
\]
gives a KP filtration of $M$, where $\zeta_1, \zeta_2, \ldots \in X_{i,d}(w)$ (resp.\ $Y_{i,d}(w)$) are all the elements ordered increasingly by the lexicographic (resp.\ reverse lexicographic) ordering of $(w\zeta)^{-1}$. The explicit isomorphism
$
\langle v_{\zeta_1}, \cdots, v_{\zeta_j} \rangle
/
\langle v_{\zeta_1}, \cdots, v_{\zeta_{j-1}} \rangle
\cong
\smod_{w\zeta_j}
$
is given by $\phi_{\zeta_j}$ defined above.
\label{filtr}
\end{thm}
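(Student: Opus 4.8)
The plan is to derive Theorem~\ref{filtr} as a formal consequence of Proposition~\ref{mainprop} together with the numerical identity supplied by Theorem~\ref{schub-pieri}, essentially by writing out carefully the ``dimension-counting squeeze'' sketched just before the statement. So I would not prove anything new here; rather I would organize the argument so that each inequality and each collapse of an inequality is justified. First I would fix the case $M = \smod_w \otimes S^d(K^i)$ (the exterior-power case being identical after replacing $X_{i,d}(w)$ by $Y_{i,d}(w)$, $\lex<$ by $\rlex<$, and $S^d$ by $\bigwedge^d$, so I would handle both in parallel with a ``resp.'' convention). I would enumerate the elements of $X_{i,d}(w)$ as $\zeta_1, \zeta_2, \ldots, \zeta_N$ so that $(w\zeta_1)^{-1} \lex< (w\zeta_2)^{-1} \lex< \cdots$; this is possible since the map $\zeta \mapsto (w\zeta)^{-1}$ is injective (distinct $\zeta$ give distinct $w\zeta$) and any finite set of permutations is totally ordered by $\lex<$.

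Next I would establish the surjections. Fix $j$ and consider $\phi_{\zeta_j} : T \otimes \bigotimes^d(K^i) \mor T$, restricted to $M \subset T \otimes \bigotimes^d(K^i)$; since $\phi_{\zeta_j}$ is a $\borel$-homomorphism and, by the second bullet of Proposition~\ref{mainprop}, $\phi_{\zeta_j}(v_{\zeta_k}) = 0$ whenever $(w\zeta_k)^{-1} \lex< (w\zeta_j)^{-1}$, i.e.\ whenever $k < j$, the submodule $\langle v_{\zeta_1}, \ldots, v_{\zeta_{j-1}} \rangle$ lies in $\ker \phi_{\zeta_j}$. Hence $\phi_{\zeta_j}$ descends to a $\borel$-homomorphism $\bar\phi_{\zeta_j} : \langle v_{\zeta_1}, \ldots, v_{\zeta_j} \rangle / \langle v_{\zeta_1}, \ldots, v_{\zeta_{j-1}} \rangle \mor T$, and by the first bullet of Proposition~\ref{mainprop} its image contains the nonzero multiple $\phi_{\zeta_j}(v_{\zeta_j})$ of $u_{w\zeta_j}$, hence contains the $\borel$-submodule generated by $u_{w\zeta_j}$, which is precisely $\smod_{w\zeta_j}$. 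On the other hand the quotient on the left is generated by the single image of $v_{\zeta_j}$, and $\bar\phi_{\zeta_j}$ sends that generator into $\smod_{w\zeta_j}$, so the image is exactly $\smod_{w\zeta_j}$ and we get a surjection $\langle v_{\zeta_1}, \ldots, v_{\zeta_j} \rangle / \langle v_{\zeta_1}, \ldots, v_{\zeta_{j-1}} \rangle \surj \smod_{w\zeta_j}$.

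Then I would run the squeeze. Since each successive quotient surjects onto $\smod_{w\zeta_j}$, additivity of dimension along the filtration gives $\dim \langle v_\zeta : \zeta \in X_{i,d}(w) \rangle \geq \sum_{\zeta} \dim \smod_{w\zeta}$; on the other hand $\langle v_\zeta : \zeta \rangle \subseteq M$ gives the reverse-direction bound $\dim M \geq \dim \langle v_\zeta : \zeta \rangle$. Finally, by the Kra\'skiewicz--Pragacz theorem $\ch \smod_w = \schub_w$ and $\ch(\smod_w \otimes S^d(K^i)) = \schub_w \cdot \ch S^d(K^i) = \schub_w \cdot h_d(x_1, \ldots, x_i)$ (here I would note $\ch S^d(K^i) = h_d(x_1,\ldots,x_i)$ and $\ch \bigwedge^d(K^i) = e_d(x_1,\ldots,x_i)$ from the $\borel$-action on $K^i$ in the Example), so Theorem~\ref{schub-pieri} gives $\dim M = \sum_{\zeta \in X_{i,d}(w)} \dim \smod_{w\zeta}$ by specializing the character identity at $x_1 = \cdots = x_n = 1$. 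Chaining the three relations forces equality throughout: $\langle v_\zeta : \zeta \rangle = M$, which gives the generation claim, and each surjection $\langle v_{\zeta_1}, \ldots, v_{\zeta_j}\rangle/\langle v_{\zeta_1}, \ldots, v_{\zeta_{j-1}}\rangle \surj \smod_{w\zeta_j}$ is a map between finite-dimensional spaces of equal dimension, hence an isomorphism, which is exactly the asserted KP filtration with the stated explicit isomorphism $\bar\phi_{\zeta_j}$.

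The main obstacle is entirely contained in Proposition~\ref{mainprop}, whose proof is deferred to the next section; granting it, Theorem~\ref{filtr} is a soft argument. The only points requiring mild care are: checking that $\phi_{\zeta_j}$ really is a $\borel$-homomorphism (this follows since $e'_{q_jp_j}$ and $\mu_{q_j}$ were already observed to be $\borel$-endomorphisms / homomorphisms on $T \otimes S^\bullet(K^i)$ and $T \otimes \bigwedge^\bullet(K^i)$, and composition preserves this); confirming that all modules in sight are finite-dimensional weight modules so that dimension counting and the character-to-dimension specialization are legitimate (the $\smod_w$ are weight modules by the Kra\'skiewicz--Pragacz theorem, and $S^d(K^i)$, $\bigwedge^d(K^i)$ are obviously finite-dimensional, hence so is the tensor product); and making sure the enumeration $\zeta_1, \zeta_2, \ldots$ is by a total order, which it is since $\lex<$ (resp.\ $\rlex<$) linearly orders permutations and $\zeta \mapsto w\zeta$ is injective.
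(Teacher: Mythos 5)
Your proposal is correct and follows essentially the same route as the paper: the paper's own derivation of Theorem~\ref{filtr} is exactly the dimension-counting squeeze in the paragraph preceding the statement, using the surjections induced by $\phi_{\zeta_j}$ from Proposition~\ref{mainprop} together with the equality $\dim M = \sum_\zeta \dim\smod_{w\zeta}$ coming from Theorem~\ref{schub-pieri}. You have merely written out the justifications (kernel containment, image identification, character specialization) that the paper leaves implicit, and these are all sound.
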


\begin{rmk}
In \cite{W2} we related KP modules with the notion of highest weight categories  (\cite{CPS}) as follows.  For $\Lmb' \subset \ZZ^n$ let $\catc_{\Lmb'}$ be the category of weight $\borel_n$-modules whose weights are all in $\Lmb'$. Then if $\Lmb'$ is an order ideal with respect to a certain ordering on $\ZZ^n$ then $\catc_{\Lmb'}$ has a structure of highest weight category whose standard objects are KP modules. 
One of the axioms for highest weight categories requires that the projective objects should have filtrations by standard objects. 

It can be shown that the projective cover of the one dimensional $\borel_n$-module $K_\lmb$ with weight $\lmb = (\lmb_1, \ldots, \lmb_n) \in \nonneg^n$ in the category $\catc_{\nonneg^n}$ is given by $S^{\lmb_1}(K^1) \otimes \cdots \otimes S^{\lmb_n}(K^n)$. 
Thus Theorem \ref{filtr} gives a proof to the fact that the indecomposable projective modules in $\catc_{\nonneg^n}$ have KP filtrations, which leads to a different proof from the one in \cite[\S 3]{W2} for the axiom mentioned above (we do not need these results about highest weight structure for $\borel$-modules in the proof of Theorem \ref{filtr} which will be done below). 
\end{rmk}

\section{Proof of Proposition \ref{mainprop}}
\label{sec:pieri-kp-prf}
\begin{lem}
Let $w \in S^{(n)}$ and $i \geq 1$. For $p, p' \leq i$ and $q, q' > i$ such that $wt_{pq}, wt_{p'q'} \gcov w$ (i.e. $t_{pq}, t_{p'q'} \in X_{i,1}(w)$), 
if $u_{pq'} \wedge e_{pq}^{m_{pq}(w)} (e'_{q'p'})^{m'_{q'p'}(w)} u_w\neq 0$ then $w(p') \geq w(p)$ and $w(q') \geq w(q)$, and if $(p, q) = (p', q')$ it is a nonzero multiple of $u_{wt_{pq}}$. 
\label{pqpq}
\end{lem}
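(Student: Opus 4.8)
The plan is to compute the vector $e_{pq}^{m_{pq}(w)} (e'_{q'p'})^{m'_{q'p'}(w)} u_w$ explicitly as an element of $\bigwedge^{\ell(w)} U$, wedge it with $u_{pq'}$, and read off when the result can be nonzero. Recall $u_w = \bigwedge_{(a,b) \in I(w)} u_{ab}$, and that $e_{pq}$ acts on $U$ by $e_{pq} u_{ab} = \delta_{bq} u_{ap}$ wait --- more precisely $e_{pq}u_{ab} = \delta_{aq}u_{pb}$, so on a wedge it sends factors $u_{qb}$ to $u_{pb}$; dually $e'_{q'p'}$ sends factors $u_{ap'}$ to $u_{aq'}$.

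First I would recall the combinatorial meaning of the numbers $m_{pq}(w)$ and $m'_{q'p'}(w)$ in terms of the graph of $w$. Since $wt_{pq} \gcov w$, the pairs $(a,b) \in I(w)$ with $a = q$ and $w(a) < b' $... let me restate: the condition $wt_{pq}\gcov w$ means $w(p)<w(q)$ with no $p<r<q$ having $w(p)<w(r)<w(q)$, and $m_{pq}(w) = \#\{r > q : w(p) < w(r) < w(q)\}$. Each such $r$ contributes an inversion pair $(q, r) \in I(w)$ (since $w(q) > w(r)$) and also $(p,r)\notin I(w)$ turning into... The key point I want to extract is that $e_{pq}^{m_{pq}(w)} u_w$ (up to a nonzero scalar, using that $e_{pq}$ applied more than $m_{pq}(w)+1$ times kills $u_w$ for degree reasons) equals $\bigwedge$ over a modified inversion set, and similarly for $e'_{q'p'}$; moreover the classical fact from \cite{KP} that $u_{wt_{pq}}$ is a nonzero multiple of $e_{pq}^{m_{pq}(w)+1}u_w$ (or the analogous raising operator identity) should be invoked or re-derived. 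So I would next establish: $e_{pq}^{m_{pq}(w)}u_w = c\cdot \bigwedge_{(a,b)\in J} u_{ab}$ where $J$ is obtained from $I(w)$ by replacing the $m_{pq}(w)$ factors $u_{q r}$ (for $r>q$ with $w(p)<w(r)<w(q)$) by $u_{pr}$ — so that wedging with one more $u_{pq}$-type factor would complete the transposition picture. Combining the $e_{pq}$ and $e'_{q'p'}$ actions (they act on disjoint-ish sets of factors, or their interaction must be tracked), compute $e_{pq}^{m_{pq}(w)}(e'_{q'p'})^{m'_{q'p'}(w)}u_w$ as a single wedge $\bigwedge u_{ab}$ over an explicit index set.

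Then for the final wedge with $u_{pq'}$: a wedge $u_{pq'} \wedge \bigwedge_{(a,b)} u_{ab}$ is nonzero iff $(p,q')$ does not already appear among the $(a,b)$ and there is no linear-dependence collision; but since all the $u_{ab}$ are distinct basis vectors, nonvanishing is exactly the condition $(p,q') \notin$ (index set). I would analyze this index set. The factors with first coordinate $p$ coming from $e_{pq}^{m_{pq}(w)}$ are $u_{pr}$ for $r>q$ with $w(p)<w(r)<w(q)$, plus the original $u_{pb}$ for $(p,b) \in I(w)$ (i.e. $b$ with $w(b) < w(p)$), minus/plus corrections from $e'_{q'p'}$ when $p = q'$... but $p \leq i < q'$ so $p \neq q'$, simplifying things. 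The factors with second coordinate $q'$ coming from $(e'_{q'p'})^{m'_{q'p'}(w)}$ are $u_{aq'}$ for $a<p'$ with $w(p')<w(a)<w(q')$. For $u_{pq'}$ to be "new", I need: $(p,q')\notin I(w)$, and $q'$ is not $>q$ with $w(p)<w(q')<w(q)$ — i.e. NOT($q'>q$ and $w(p)<w(q')<w(q)$) — and $(p,q')$ is not among the $e'$-produced factors — i.e. NOT($p<p'$ and $w(p')<w(p)<w(q')$). I expect the contrapositive to unwind cleanly: if $w(p')<w(p)$, then $(p,q')$ sits in the $e'$-image set (using $w(p')<w(p)<w(q')$, the last inequality from $wt_{p'q'}\gcov w$ giving $w(p')<w(q')$ hence... need $w(p)<w(q')$ too, which needs care), forcing the wedge to vanish; hence nonvanishing $\Rightarrow w(p')\geq w(p)$. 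Symmetrically, if $w(q')<w(q)$, then either $q'>q$ (and then $w(p)<w(q')<w(q)$ puts $u_{pq'}$ in the $e_{pq}$-image, vanishing) or $q'<q$, handled by the no-$r$-between condition for $wt_{pq}\gcov w$ combined with positions; hence nonvanishing $\Rightarrow w(q')\geq w(q)$. Finally, when $(p,q)=(p',q')$, the two operator strings are "dual halves" of the single raising operator that produces $u_{wt_{pq}}$, and $u_{pq}\wedge(\text{the wedge})$ reassembles exactly $\bigwedge_{(a,b)\in I(wt_{pq})}u_{ab} = u_{wt_{pq}}$ up to sign and the accumulated nonzero scalars; I would verify the index set $\{(p,q)\}\cup J' = I(wt_{pq})$ by the standard description of how $I$ changes under right multiplication by a covering transposition.

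The main obstacle I anticipate is bookkeeping the interaction between the $e_{pq}$-string and the $e'_{q'p'}$-string when $p$, $p'$, $q$, $q'$ are not all distinct in the relevant coordinates — in particular whether applying $(e'_{q'p'})^{m'_{q'p'}(w)}$ first changes which factors $e_{pq}^{m_{pq}(w)}$ acts on (it can, if a factor $u_{q\,r}$ with second coordinate $p'$... but second coordinate $r>q>i\geq p'$ rules that out; still the case $q=q'$ or $p=p'$ needs explicit checking) — and, relatedly, making sure the scalar $c$ stays nonzero, which amounts to checking that each application of $e_{pq}$ (resp. $e'_{q'p'}$) hits at least one surviving factor and does not prematurely annihilate the wedge. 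This is where I would need the precise count: after $k < m_{pq}(w)$ applications there remain $m_{pq}(w)-k$ factors of the form $u_{qr}$, so the $(k+1)$-st application is nonzero, and the binomial-type coefficient that appears is $m_{pq}(w)!$ times (number of surviving factors at each stage), manifestly nonzero in characteristic $0$. Everything else is a finite, if tedious, case analysis on the relative order of $w(p), w(p'), w(q), w(q')$ and of $p,p',q,q'$.
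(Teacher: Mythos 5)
The paper does not actually prove this lemma: its ``proof'' is a one-line citation to \cite[Lemma 5.8]{W2}. Your explicit computation is exactly the kind of self-contained argument that citation stands in for, and the strategy is sound. The key disjointness does hold: $e_{pq}$ only touches factors $u_{qb}$ with $(q,b)\in I(w)$, hence $b>q>i\geq p'$, while $e'_{q'p'}$ only touches factors $u_{ap'}$ and produces $u_{aq'}$ with $a<p'\leq i<q$, so the two strings never compete for or create each other's targets; each string, applied exactly $m_{pq}(w)$ resp.\ $m'_{q'p'}(w)$ times, collapses to $m_{pq}(w)!$ resp.\ $m'_{q'p'}(w)!$ times a single wedge over an explicit index set $J$ (a factor accidentally produced by both strings only kills the wedge outright, which is harmless for the claim), and nonvanishing of $u_{pq'}\wedge(\cdot)$ is precisely $(p,q')\notin J$. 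Your three membership conditions for $(p,q')\in J$ are the right ones, and $J\cup\{(p,q)\}=I(wt_{pq})$ in the diagonal case is the standard description of how the inversion set changes under a covering transposition. Two loose ends to tie up. First, in both contrapositive arguments the sub-case you flag as ``needs care'' closes via your own first condition: if $w(p')<w(p)$ but $w(p)>w(q')$, or if $w(q')<w(q)$ but $w(q')<w(p)$, then $(p,q')\in I(w)$ and the wedge already vanishes; otherwise the chain $w(p')<w(p)<w(q')$ (resp.\ $w(p)<w(q')<w(q)$) holds, and the covering condition for $wt_{p'q'}\gcov w$ (resp.\ $wt_{pq}\gcov w$) then forces $p<p'$ (resp.\ $q'>q$), since otherwise $p$ (resp.\ $q'$) would be a forbidden intermediate position; this puts $(p,q')$ into the $e'$-image (resp.\ $e$-image) part of $J$. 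Second, the parenthetical ``classical fact'' that $u_{wt_{pq}}$ is a nonzero multiple of $e_{pq}^{m_{pq}(w)+1}u_w$ is false: the $(m_{pq}(w)+1)$-st application can only hit factors $u_{qb}$ with $w(b)<w(p)$, whose images $u_{pb}$ already occur in the wedge, so $e_{pq}^{m_{pq}(w)+1}u_w=0$. The new inversion $(p,q)$ must be supplied by the external factor $u_{pq'}$ --- which is exactly why the lemma wedges with it --- so that remark should be deleted; nothing else in your argument depends on it.
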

\begin{proof}
This is essentially the same as \cite[Lemma 5.8]{W2}.
\end{proof}

\begin{lem}
Let $w$ be a permutation, $i \geq 1$ and $d \geq 0$. 
Let $\zeta = t_{p_1q_1} \cdots t_{p_dq_d} \in X_{i,d}(w)$ (resp.\ $Y_{i,d}(w)$) and $1 \leq a \leq d$. Suppose that there exists no $b < a$ satisfying $p_b = p_a$ (resp.\ $q_b = q_a$). Then $m_{p_aq_a}(w_a) = m_{p_aq_a}(w)$ and $m'_{q_ap_a}(w_a) = m'_{q_ap_a}(w)$ where $w_a = wt_{p_1q_1} \cdots t_{p_{a-1}q_{a-1}}$ as in Definition \ref{defn-pieri}. 
\label{m-invar}
\end{lem}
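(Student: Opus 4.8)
The plan is to prove this by induction on $a$, reducing to the single-step case where we compare $m_{p_aq_a}(w_{a}) $ to $m_{p_aq_a}(w_{a-1})$ (and similarly for $m'$), and then iterate down from $w_a$ to $w_1 = w$. So the core of the argument is: given $w' = w'' t_{p'q'}$ with $p',p' \leq i < q', q$, and given $p_a \neq p'$ (in the $X$ case; $q_a \neq q'$ in the $Y$ case), show that applying the transposition $t_{p'q'}$ does not change $m_{p_aq_a}$ or $m'_{q_ap_a}$. Here I will write $p = p_a$, $q = q_a$ to lighten notation.

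First I would recall what $m_{pq}(z) = \#\{r > q : z(p) < z(r) < z(q)\}$ and $m'_{qp}(z) = \#\{r < p : z(p) < z(r) < z(q)\}$ measure, and note that passing from $z = w''$ to $zt_{p'q'} = w'$ only swaps the values in positions $p'$ and $q'$. So the multiset of values $\{z(r)\}$ is unchanged; only the \emph{position labels} attached to the two values $z(p')$ and $z(q')$ are exchanged. I would then go through the cases according to whether $p'$ or $q'$ coincides with $p$, with $q$, or lies in the relevant index ranges ($> q$ for $m$, $< p$ for $m'$). The key structural inputs are: (i) in the $X_{i,d}(w)$ case the hypothesis $p' \neq p$ together with $p', p \leq i < q, q'$ forces $p' \notin \{p, q\} \cup \{r : r > q\}$ and also handles $q'$; the constraint $w_j(p_j) $ increasing (resp.\ the covering conditions) controls how the swapped values sit relative to the window $(z(p), z(q))$; (ii) the covering relation $w' \gtrdot w''$ (equivalently $t_{p'q'} \in X_{i,1}(w'')$ up to the chain condition) means $w''(p') < w''(q')$ with no intermediate value in an intermediate position, which pins down that the two swapped values, if either lands strictly inside the open interval $(z(p), z(q))$, do so in a way that the count is preserved when both are counted or both omitted.

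The main obstacle I expect is the bookkeeping in the subcase where one of $p', q'$ equals $q$ (or $p$): then the value $z(q) = w_{a}(q)$ itself changes when we pass along the chain, so ``the interval $(z(p), z(q))$'' is moving, and one must check that the set of $r$ counted by $m_{pq}$ is nonetheless stable. This is exactly where the chain hypotheses $w_1 \lessdot w_2 \lessdot \cdots$ and the monotonicity $w_1(p_1) < w_2(p_2) < \cdots$ (resp.\ $w_1(q_1) > w_2(q_2) > \cdots$), plus the ``no $b < a$ with $p_b = p_a$'' assumption, must be used: they guarantee that the earlier transpositions $t_{p_bq_b}$ with $b < a$ act on positions/values disjoint enough from the window defining $m_{p_aq_a}(w)$ that the count cannot change. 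I would organize this by first disposing of the cases where $\{p',q'\} \cap \{p,q\} = \emptyset$ (a direct "swap outside the window" argument, possibly needing a short check that a value moved into or out of the index range $r > q$ is compensated), and then treating $q' = q$ and $p' = p$ (the latter excluded by hypothesis in the $X$ case, dually $q' = q$ excluded in the $Y$ case) using the covering and monotonicity constraints. Once the single-step invariance is established, the induction on $a$ is immediate because the hypothesis "no $b < a$ with $p_b = p_a$" passes to every initial segment.
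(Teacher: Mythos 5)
Your overall architecture --- reduce to a single step along the chain $w_1\lcov w_2\lcov\cdots$ and iterate --- matches the paper's, but you misplace the difficulty, and the step you dismiss as easy is where the real gap is. First, the subcase you flag as the ``main obstacle'' ($q'=q_a$ in the $X$ case, $p'=p_a$ in the $Y$ case) is vacuous: as remarked after Definition \ref{defn-pieri}, the defining conditions of $X_{i,d}(w)$ already force $q_1,\ldots,q_d$ to be pairwise distinct (dually for the $p_j$ in $Y_{i,d}(w)$), so together with the hypothesis $p_b\neq p_a$ and the constraints $p_j\leq i<q_j$ one always has full disjointness $\{p_b,q_b\}\cap\{p_a,q_a\}=\varnothing$ for $b<a$. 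This is exactly the reduction the paper makes, and you should use it rather than plan a case analysis for a case that cannot occur.

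Second, and more seriously, the fully disjoint case is not ``a direct swap outside the window argument''. Knowing only $w_{b+1}=w_bt_{p_bq_b}\gcov w_b$ and disjointness does not preserve the count. Concretely, let $z$ be the permutation with $z(1)=3$, $z(2)=1$, $z(3)=5$, $z(4)=4$, $z(5)=2$, $z(k)=k$ for $k\geq 6$, and take $(p,q)=(1,3)$, $(p',q')=(2,4)$ with $i=2$: then $zt_{24}\gcov z$, $zt_{13}\gcov z$ and $\{1,3\}\cap\{2,4\}=\varnothing$, yet $m_{13}(z)=1$ while $m_{13}(zt_{24})=0$. What excludes this behaviour is the \emph{additional} covering relation $w_{b+1}t_{p_aq_a}\gcov w_{b+1}$ (in the example, $zt_{24}t_{13}\not\gcov zt_{24}$). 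For $b+1=a$ this is the chain condition, but for $b+1<a$ it is not among your hypotheses and must itself be carried along as part of the downward induction. This is precisely why the paper's auxiliary one-step lemma assumes $wt_{p'q'}t_{pq}\gcov wt_{p'q'}\gcov w$ and concludes, besides the invariance of $m_{pq}$ and $m'_{qp}$, also $wt_{pq}\gcov w$ --- that extra conclusion is what feeds the next iteration. You need to strengthen your induction hypothesis accordingly, and the one-step statement still requires an actual argument (the paper proves it with a rectangle-counting observation applied to $[p,q]\times[w(p),w(q)]$ and $[p',q']\times[w(p'),w(q')]$), which your proposal only gestures at.
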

\begin{proof}
We show the case $\zeta \in X_{i,d}(w)$: the other case can be treated similarly. 
First note that $p_1, \ldots, p_{a-1} \neq p_a$ by the hypothesis. Also, as we have remarked before, $q_1, \ldots, q_a$ are all different. Thus the proof is now reduced to the following lemma.  
\end{proof}
\begin{lem}
Let $p<q$, $p'<q'$ and suppose
\begin{itemize}
\item $\{p, q\} \cap \{p', q'\} = \varnothing$, and
\item $wt_{p'q'}t_{pq} \gcov wt_{p'q'} \gcov w$. 
\end{itemize}
Then $m_{pq}(wt_{p'q'})=m_{pq}(w)$, $m'_{qp}(wt_{p'q'})=m'_{qp}(w)$ and $wt_{pq} \gcov w$. 
\end{lem}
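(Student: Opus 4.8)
The plan is to reduce everything to the combinatorics of the values $w(p), w(q), w(p'), w(q')$ and the positions $p, q, p', q'$, using repeatedly the characterization of the covering relation recalled in Section~\ref{prelimi}: $ut_{rs} \gcov u$ (for $r<s$) iff $u(r)<u(s)$ and there is no $r<t<s$ with $u(r)<u(t)<u(s)$. First I would treat the quantity $m_{pq}(\cdot) = \#\{t > q : \bullet(p) < \bullet(t) < \bullet(q)\}$. Since $t_{p'q'}$ only swaps the values in positions $p'$ and $q'$, the multiset of values of $wt_{p'q'}$ on positions $>q$ differs from that of $w$ only possibly at the positions $p'$ and $q'$ when they exceed $q$; and similarly $(wt_{p'q'})(p) = w(p)$ and $(wt_{p'q'})(q) = w(q)$ because $\{p,q\}\cap\{p',q'\}=\varnothing$. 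So the only way $m_{pq}$ could change is if exactly one of $w(p'), w(q')$ lies strictly between $w(p)$ and $w(q)$ with the relevant position $>q$. The key observation is that the hypothesis $wt_{p'q'}\gcov w$ forbids this: there is no $p'<t<q'$ with $w(p')<w(t)<w(q')$, and combined with the second covering hypothesis $wt_{p'q'}t_{pq}\gcov wt_{p'q'}$ (which says $q$ is "adjacent from above" to $p$ for the values of $wt_{p'q'}$, hence essentially for $w$), one checks that whenever one of $p', q'$ contributes to $m_{pq}$ of one permutation it also contributes to the other. I expect this bookkeeping — a finite case analysis on the relative order of the four values and the four positions — to be the bulk of the work, and it is the step I expect to be the main obstacle, mostly because of the number of cases rather than any real difficulty. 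The argument for $m'_{qp}(\cdot) = \#\{t < p : \bullet(p) < \bullet(t) < \bullet(q)\}$ is entirely symmetric, looking at positions $<p$ instead of $>q$.

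For the final assertion $wt_{pq}\gcov w$, I would argue as follows. From $wt_{p'q'}t_{pq}\gcov wt_{p'q'}$ we get $(wt_{p'q'})(p) < (wt_{p'q'})(q)$, i.e. $w(p) < w(q)$ since $p,q\notin\{p',q'\}$, which is the first half of the covering condition for $wt_{pq}\gcov w$. For the second half, suppose for contradiction there is $p<t<q$ with $w(p)<w(t)<w(q)$. If $t\notin\{p',q'\}$ then $(wt_{p'q'})(t)=w(t)$ lies strictly between $(wt_{p'q'})(p)=w(p)$ and $(wt_{p'q'})(q)=w(q)$, contradicting $wt_{p'q'}t_{pq}\gcov wt_{p'q'}$. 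So $t\in\{p',q'\}$; here I would use the hypothesis $wt_{p'q'}\gcov w$ together with the location of $t$ strictly between $p$ and $q$ to derive a contradiction — e.g. if $t=p'$ then $q'$ must be positioned so as to violate the "no intermediate value" clause of one of the two covering relations, and symmetrically if $t=q'$. This part is short once the value inequalities are laid out.

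Finally I would assemble the pieces: the two numerical equalities $m_{pq}(wt_{p'q'})=m_{pq}(w)$ and $m'_{qp}(wt_{p'q'})=m'_{qp}(w)$ follow from the first paragraph, and $wt_{pq}\gcov w$ from the second, completing the lemma and hence, via Lemma~\ref{m-invar}, feeding into the inductive reduction used there. To keep the case analysis manageable I would organize it by first fixing whether $w(p')$ and $w(q')$ each lie below $w(p)$, between $w(p)$ and $w(q)$, or above $w(q)$ (noting $w(p')<w(q')$ cuts the possibilities), and within the "between" cases using $wt_{p'q'}\gcov w$ to pin down the relative position of $p'$ and $q'$ against $p$ and $q$. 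I do not anticipate any conceptual obstacle beyond this; the proof is elementary but needs care so that no case is silently dropped.
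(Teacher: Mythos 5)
Your proposal is correct, and the cases you defer to "bookkeeping" do all close; but it is organized quite differently from the paper's argument, so a comparison is worthwhile. The paper proves all three claims at once from a single combinatorial observation: for two axis-parallel rectangles $R_1,R_2$ in general position, the signed count (NW and SE corners minus NE and SW corners) of corners of $R_1$ inside $R_2$ equals the same signed count with the roles reversed. Taking $R_1=[p,q]\times[w(p),w(q)]$ and $R_2=[p',q']\times[w(p'),w(q')]$, the two covering hypotheses kill one term on each side, forcing all four terms to vanish; this yields $wt_{pq}\gcov w$ immediately, and applying the same observation to the half-infinite rectangles $[q,M]\times[w(p),w(q)]$ and $[-M,p]\times[w(p),w(q)]$ against $R_2$ gives the two $m$-equalities, since the graphs of $w$ and $wt_{p'q'}$ differ only at the corners of $R_2$. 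Your route instead unpacks this into a direct case analysis: for $wt_{pq}\gcov w$ you reduce to an offending $t\in\{p',q'\}$ and rule out each placement of the other index using the two covering relations (this works: e.g.\ if $t=p'$ and $q'>q$, then $wt_{p'q'}\gcov w$ forces $w(p)<w(q')<w(q)$ and position $p'$ then violates $wt_{p'q'}t_{pq}\gcov wt_{p'q'}$); for $m_{pq}$ the only nontrivial configuration is $p'<q<q'$, where you must check that $w(q')\in(w(p),w(q))$ iff $w(p')\in(w(p),w(q))$, and each direction of the contrapositive is indeed blocked by one of the two covering hypotheses; the $m'_{qp}$ case ($p'<p<q'$) is symmetric. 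What the paper's observation buys is exactly the safeguard you worry about at the end: it replaces the enumeration of relative positions of $p,q,p',q'$ and their values by one sign identity, so no case can be "silently dropped." What your version buys is that it needs no setup and makes visible precisely which hypothesis excludes which configuration. To turn your sketch into a complete proof you would need to actually write out the handful of cases identified above; as stated they are asserted rather than verified, but none of them fails.
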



\begin{center}
\begin{tikzpicture}[scale=0.35]
\draw (-3,3)--(-1,5);
\draw (0,0)--(2,2);
\draw (-3,2)--(-1,0);
\draw (-0.5,-0.5) node{$w$};
\draw (-3.5,2.5) node{$wt_{p'q'}$};
\draw (-0.5,5.5) node{$wt_{p'q'}t_{pq}$};
\draw (2.5,2.5) node{$wt_{pq}$};

\draw [->,decorate,decoration={snake,amplitude=.4mm,segment length=2mm,post length=1mm}](-1,3)--(0,2);
\end{tikzpicture}
\begin{tikzpicture}[scale=0.35]
\newcount\hoge
\hoge=8
\draw (-3-\hoge,3+\hoge)--(-1-\hoge,5+\hoge);
\draw (-\hoge,\hoge)--(2-\hoge,2+\hoge);
\draw (-3,3)--(-1,5);
\draw (0,0)--(2,2);
\draw (-3,2)--(-1,0);
\draw (-3-\hoge,2+\hoge)--(-1-\hoge,0+\hoge);
\draw (-3-\hoge+3,2+\hoge-3)--(-1-\hoge+3-1,0+\hoge-3+1);

\draw (-0-\hoge/2,2+\hoge/2) node{$\cdot$};
\draw (-0-\hoge/2-0.3,2+\hoge/2+0.3) node{$\cdot$};
\draw (-0-\hoge/2+0.3,2+\hoge/2-0.3) node{$\cdot$};
\draw (0,0)--(2,2);
\draw (-0.5,-0.5) node{$w_1=w$};
\draw (2.5,2.5) node{$wt_{p_aq_a}$};
\draw (-3.5,2.5) node{$w_2$};
\draw (-0.5,5.5) node{$w_2t_{p_aq_a}$};
\draw (-0.5-\hoge,-0.5+\hoge) node{$w_{a-1}$};
\draw (2.5-\hoge,2.5+\hoge) node{$w_{a-1}t_{p_aq_a}$};
\draw (-3.5-\hoge,2.5+\hoge) node{$w_a$};
\draw (-0.5-\hoge,5.5+\hoge) node{$w_at_{p_aq_a}$};

\draw [->,decorate,decoration={snake,amplitude=.4mm,segment length=2mm,post length=1mm}](-1,3)--(0,2);
\draw [->,decorate,decoration={snake,amplitude=.4mm,segment length=2mm,post length=1mm}](-1-\hoge,3+\hoge)--(0-\hoge,2+\hoge);
\draw [->,decorate,decoration={snake,amplitude=.4mm,segment length=2mm,post length=1mm}](-1-\hoge+3,3+\hoge-3)--(0-\hoge+3,2+\hoge-3);

\end{tikzpicture}
\end{center}


\begin{proof}
Let us begin with a simple observation: suppose there exist two rectangles $R_1$ and $R_2$ with edges parallel to coordinate axes. Suppose that no two edges of these rectangles lie on the same line. Then, checking all the possibilities we see that
\begin{align*}
&\#(\text{NW and SE corners of $R_1$ lying inside $R_2$}) - \#(\text{NE and SW corners of $R_1$ lying inside $R_2$}) \\
&=\#(\text{NW and SE corners of $R_2$ lying inside $R_1$}) - \#(\text{NE and SW corners of $R_2$ lying inside $R_1$}). 
\end{align*}

First consider the case $R_1=[p, q] \times [w(p), w(q)]$ and $R_2=[p', q'] \times [w(p'), w(q')]$ in the observation above. $wt_{p'q'}t_{pq} \gcov wt_{p'q'} \gcov w$ implies that the first term in the left-hand side and the second term in the right-hand side vanish (here the coordinate system is taken so that points with smaller coordinates go NW). Thus all the terms must vanish. In particular the first term on the right-hand side vanishes and thus $wt_{pq} \gcov w$. 

We have shown that none of the points $(p, w(p)), (p, w(q)), (q, w(p))$ and $(q, w(q))$ lie in $[p', q'] \times [w(p'), w(q')]$. 
Since $m_{pq}(w)$ (resp.\ $m_{p'q'}(w)$) is the number of points of the graph of $w$ 
lying inside the rectangle $R'_1 = [q, M] \times [w(p),w(q)]$ (resp.\ $R'_1 = [-M, p] \times [w(p),w(q)]$) for $M \gg 0$ and the graphs of $w$ and $wt_{p'q'}$ differ only at the vertices of the rectangle $R_2 = [p',q'] \times [w(p'),w(q')]$, applying the observation to these rectangles shows the remaining claims.

\end{proof}

\begin{proof}[Proof of Proposition \ref{mainprop}]
\ 

\emph{Proof for $X_{i,d}(w)$}: 
We assume $(w\zeta)^{-1} \lex\leq (w\zeta')^{-1}$ and show that $\phi_{\zeta'}(v_{\zeta})=0$ unless $\zeta' = \zeta$ and $\phi_{\zeta}(v_{\zeta})$ is a nonzero multiple of $u_{w\zeta}$. 
Let $\zeta = t_{p_1q_1} \cdots t_{p_dq_d}$ and $\zeta'=t_{p'_1q'_1} \cdots t_{p'_dq'_d}$ as in Definition \ref{defn-pieri}. We write $w_a = w t_{p_1q_1} \cdots t_{p_{a-1}q_{a-1}}$ and $w'_a = w t_{p'_1 q'_1} \cdots t_{p'_{a-1}q'_{a-1}}$.  

For $\zeta = \prod_j t_{p_jq_j}$ and $\zeta' = \prod_j t_{p'_jq'_j}$ in $X_{i,d}(w)$ we have
\[
\phi_{\zeta'}(v_\zeta) = \sum_{\sigma \in S_d} \left( u_{p_{\sigma(d)}q'_{d}} \wedge \cdots \wedge u_{p_{\sigma(1)}q'_{1}} \wedge
( \prod_{j=1}^d E_j \prod_{j=1}^d E'_j \cdot u_w ) \right)
\quad \cdots (*)
\]
where $E_j = e_{p_jq_j}^{m_{p_jq_j}(w_j)}$ and $E'_j = (e'_{q'_jp'_j})^{m'_{q'_jp'_j}(w'_j)}$.

If $w(p_1) < w(p'_1)$, then $(w\zeta)^{-1}(w(p_1)) = q_1 > p_1 = (w\zeta')^{-1}(w(p_1))$ and $(w\zeta)^{-1}(j) = w^{-1}(j) = (w\zeta')^{-1}(j)$ for all $j < w(p_1)$, and this contradicts the assumption $(w\zeta)^{-1} \lex\leq (w\zeta')^{-1}$. Thus $w(p_1) \geq w(p'_1)$. 
Also, by a similar argument, if $p_1 = p'_1$ then $q_1 \leq q_1'$. 

Fix $\sigma \in S_d$. Let $1 \leq a \leq d$ be minimal such that $p_a = p_{\sigma(1)}$. Note that this in particular implies $w_a(p_a) = w(p_a)$. 
We have
\begin{align*}
& u_{p_{\sigma(d)}q'_{d}} \wedge \cdots \wedge u_{p_{\sigma(1)}q'_{1}} \wedge
(\prod_j E_j \prod_j E'_j \cdot u_w)\\
&= u_{p_{\sigma(d)}q'_{d}} \wedge \cdots \wedge u_{p_{\sigma(2)}q'_{2}} \wedge
\prod_{j \neq a} E_j \prod_{j \neq 1} E'_j \cdot (u_{p_{\sigma(1)} q'_1} \wedge E_a E'_1 u_w) \\
&= u_{p_{\sigma(d)}q'_{d}} \wedge \cdots \wedge u_{p_{\sigma(2)}q'_{2}} \wedge
\prod_{j \neq a} E_j \prod_{j \neq 1} E'_j \cdot (u_{p_a q'_1} \wedge E_a E'_1 u_w) \\
&= u_{p_{\sigma(d)}q'_{d}} \wedge \cdots \wedge u_{p_{\sigma(2)}q'_{2}} \wedge
\prod_{j \neq a} E_j \prod_{j \neq 1} E'_j \cdot (u_{p_a q'_1} \wedge e_{p_aq_a}^{m_{p_aq_a}(w)} (e'_{q'_1p'_1})^{m'_{q'_1p'_1}(w)} u_w)
\end{align*}
where the last equality is by Lemma \ref{m-invar} (note that $w'_1 = w$ by definition). 

First consider the case $w(p_1) > w(p'_1)$. We show that the summand in $(*)$ vanishes for all $\sigma$. 
It suffices to show $u_{p_a q'_1} \wedge e_{p_aq_a}^{m_{p_aq_a}(w)} (e'_{q'_1p'_1})^{m'_{q'_1p'_1}(w)} u_w = 0$. 
We have $w(p_a) = w_a(p_a) \geq w(p_1) > w(p'_1)$. 
Thus by Lemma \ref{pqpq} we see $u_{p_a q'_1} \wedge e_{p_aq_a}^{m_{p_aq_a}(w)} (e'_{q'_1p'_1})^{m'_{q'_1p'_1}(w)} u_w = 0$ (note that $wt_{p_aq_a} \gcov w$ by Lemma \ref{m-invar}). 

Next consider the case $w(p_1) = w(p'_1)$ and $a > 1$. 
In this case we see $u_{p_a q'_1} \wedge e_{p_aq_a}^{m_{p_aq_a}(w)} (e'_{q'_1p'_1})^{m'_{q'_1p'_1}(w)} u_w = 0$ since $w(p_a)=w_a(p_a)>w(p_1) = w(p'_1)$.

Next consider the case $w(p_1) = w(p'_1)$, $a=1$ and $q_1 < q_1'$. 
Then since $wt_{p_1q_1}, wt_{p'_1q'_1} \gcov w$ it follows that $w(q'_1) < w(q_1)$. So again by Lemma \ref{pqpq} we see $u_{p_a q'_1} \wedge e_{p_aq_a}^{m_{p_aq_a}(w)} (e'_{q'_1p'_1})^{m'_{q'_1p'_1}(w)} u_w = 0$. 

So the only remaining summands in $(*)$ are the ones with $(p_1, q_1) = (p'_1, q'_1)$ and $a=1$, i.e.\ $p_{\sigma(1)}=p_1$. It is easy to see that the sum of such summands is a nonzero multiple of the sum of terms with $\sigma(1)=1$. 
If $\sigma(1)=1$ we have, by the latter part of Lemma \ref{pqpq}, 
\begin{align*}
& u_{p_{\sigma(d)}q'_{d}} \wedge \cdots \wedge u_{p_{\sigma(1)}q'_{1}} \wedge
(\prod_{j=1}^d E_j \prod_{j=1}^d E'_j \cdot u_w) \\
&= u_{p_{\sigma(d)}q'_{d}} \wedge \cdots \wedge u_{p_{\sigma(2)}q'_{2}} \wedge
\prod_{j=2}^d E_j \prod_{j=2}^d E'_j \cdot (u_{p_1 q_1} \wedge e_{p_1q_1}^{m_{p_1q_1}(w)} (e'_{q_1p_1})^{m'_{q_1p_1}(w)} u_w) \\
&= (\text{nonzero const.}) \cdot u_{p_{\sigma(d)}q'_{d}} \wedge \cdots \wedge u_{p_{\sigma(2)}q'_{2}} \wedge
(\prod_{j=2}^d E_j \prod_{j=2}^d E'_j \cdot u_{wt_{p_1q_1}}). 
\end{align*}
So, working inductively on $d$ (using $wt_{p_1q_1}$, $t_{p_2q_2} \cdots t_{p_dq_d}$ and $t_{p'_2q'_2}\cdots t_{p'_dq'_d}$ in place of $w$, $\zeta$ and $\zeta'$ respectively, noting that if $(p_1, q_1) = (p'_1, q'_1)$ then $(w\zeta)^{-1} \lex\leq (w\zeta')^{-1}$ implies
$((wt_{p_1q_1}) \cdot t_{p_2q_2} \cdots t_{p_dq_d})^{-1} = (w\zeta)^{-1}
\lex\leq
(w\zeta')^{-1} = ((wt_{p_1q_1}) \cdot t_{p'_2q'_2} \cdots t_{p'_dq'_d})^{-1}$) we see that:
\begin{itemize}
\item $u_{p_{\sigma(d)}q'_{d}} \wedge \cdots \wedge u_{p_{\sigma(1)}q'_{1}} \wedge
(\prod_j E_j \prod_j E'_j \cdot u_w)$ vanishes if $(w\zeta)^{-1} \lex< (w\zeta')^{-1}$, or if $\zeta'=\zeta$ and $\sigma \neq \id$, 
and 
\item if $\zeta'=\zeta$ and $\sigma=\id$ then it is a nonzero multiple of $u_{w\zeta}$. 
\end{itemize}
This finishes the proof for $X_{i,d}(w)$. 

\emph{Proof for $Y_{i,d}(w)$}: 
This proceeds much similarly to the previous case. 
Here instead of $(*)$ we use
\begin{align*}
\phi_{\zeta'}(v_\zeta) &= \sum_{\sigma \in S_d} \left( \sgn(\sigma) \cdot u_{p_{\sigma(d)}q'_{d}} \wedge \cdots \wedge u_{p_{\sigma(1)}q'_{1}} \wedge
(\prod_{j=1}^d E_j \prod_{j=1}^d E'_j \cdot u_w) \right) \\
&= \sum_{\sigma \in S_d} \left( u_{p_{d}q'_{\sigma^{-1}(d)}} \wedge \cdots \wedge u_{p_{1}q'_{\sigma^{-1}(1)}} \wedge
(\prod_{j=1}^d E_j \prod_{j=1}^d E'_j \cdot u_w) \right)
\end{align*}
where $E_j = e_{p_jq_j}^{m_{p_jq_j}(w_j)}$ and $E'_j = (e'_{q'_jp'_j})^{m'_{q'_jp'_j}(w'_j)}$ as before. 

We assume $(w\zeta)^{-1} \rlex\leq (w\zeta')^{-1}$. 
Fix $\sigma$ and take $1 \leq a \leq d$ minimal with $q'_a = q'_{\sigma^{-1}(1)}$. 
By an argument similar to the above, it suffices to show that
$u_{p_1q'_a} \wedge e_{p_1q_1}^{m_{p_1q_1}(w)} (e'_{q'_ap'_a})^{m'_{q'_ap'_a}(w)} u_w$ is zero unless $a=1$ and $(p'_1, q'_1) = (p_1, q_1)$, and in a such case it is a nonzero multiple of $u_{wtp_1q_1}$. 

Since $(w\zeta)^{-1} \rlex\leq (w\zeta')^{-1}$ by the hypothesis, we see that $w(q_1) \geq w(q'_1)$, and that if $w(q_1)=w(q'_1)$ then $p_1 \leq p'_1$. 

If $w(q_1)>w(q'_1)$ then the claim follows from Lemma \ref{pqpq} since $w(q_1) > w(q'_1) \geq w'_a(q'_a)=w(q'_a)$. If $w(q_1)=w(q'_1)$ and $a > 1$ then the claim follows from Lemma \ref{pqpq} since in this case $w(q_1) = w(q'_1) > w(q'_a)$ by $wt_{p_1q_1}, wt_{p'_1q'_1} \gcov w$. 
If $q_1=q'_1$, $a = 1$ and $p_1 < p'_1$ the claim follows from Lemma \ref{pqpq} since $w(p_1) > w(p'_1)$. 
Finally if $(p_1, q_1) = (p'_1, q'_1)$ and $a=1$ then $u_{p_1q'_a} \wedge e_{p_1q_1}^{m_{p_1q_1}(w)} (e'_{q'_ap'_a})^{m'_{q'_ap'_a}(w)} u_w = u_{p_1q_1} \wedge e_{p_1q_1}^{m_{p_1q_1}(w)} (e'_{q_1p_1})^{m'_{q_1p_1}(w)} u_w$ is a constant multiple of $u_{wt_{p_1q_1}}$ by Lemma \ref{pqpq}.  
\end{proof}

\nocite{*}

\end{document}